\begin{document}

\title{Closed-form expressions for Farhi's constant and related integrals and and its generalization}

\titlerunning{Closed-form expressions for Farhi's constant...}

\author{F\'{a}bio M.~.S.~Lima}

\institute{F.M.S. Lima \at
              Institute of Physics, University of Bras\'{i}lia, P.O.~Box 04455, Bras\'{i}lia, DF, 70919-970, Brazil. \\
              Tel.: +55 (061) 31076088\\
              \email{fabio@fis.unb.br}
}

\date{Received: 22 May 2019 / Accepted: date}

\maketitle

\begin{abstract}
In a recent work, Farhi developed a Fourier series expansion for the function $\,\ln{\Gamma(x)}\,$ on the interval $(0,1)$, which allowed him to derive a nice formula for the constant $\,\eta := 2 \int_0^1{\ln{\Gamma(x)} \, \sin{(2 \pi x)} \, dx}$. At the end of that paper, he asks whether $\eta$ could be written in terms of other known mathematical constants. Here in this work, after deriving a simple closed-form expression for $\eta$, I show how it can be used for evaluating other related integrals, as well as certain logarithmic series, which allows for a generalization in the form of a continuous function $\eta(x)$, $x \in [0,1]$. Finally, from the Fourier series expansion of $\,\ln{\Gamma(x)}$, $x \in (0,1)$, I make use of Parseval's theorem to derive a closed-form expression for $\,\int_0^1{\ln^2{\Gamma(x)}~dx}$.
\keywords{Special functions \and Log-gamma function \and Fourier series \and Series and summability}
\subclass{11Y60 \and 33B15 \and 42A16 \and 35C05}
\end{abstract}

\section{Introduction}

The Fourier series expansion of the real function $\,\ln{\Gamma(x)}\,$ over the open interval $(0,1)$, where $\,\Gamma(x) := \int_0^\infty{t^{\,x -1}\,e^{-t} \: d t}\,$ is the classical Gamma function, was developed by Farhi in a recent note~\cite{Farhi}. There, he shows that
\begin{equation}
\ln{\Gamma(x)} = \frac12 \ln{\pi} + \pi \, \eta \left(\frac12 -x \right) -\frac12 \ln{\sin{\!(\pi x)}} +\frac{1}{\pi} \sum_{n=1}^\infty{\frac{\ln{n}}{n} \, \sin{\!(2 \pi n x)}}
\label{eq:Farhi1}
\end{equation}
holds for all $\,x \in (0, 1)$, where
\begin{equation}
\eta := 2 \int_0^1{\ln{\Gamma(x)} \, \sin{\!(2 \pi x)} \: dx}
\label{eq:eta0}
\end{equation}
is the \emph{Farhi constant}. Numerical integration yields $\,\eta = 0.76874789\ldots$  At the end of Ref.~\cite{Farhi}, Farhi asks if $\,\eta\,$ could be written in terms of other known constants. Here in this paper, we make use of Farhi's formula itself to derive a closed-form expression for $\eta$ in terms of $\pi$, $\gamma$, and $\ln{(2 \pi)}$ only, where $\,\gamma := \lim_{\,n \rightarrow \infty}{(H_n -\ln{n})}\,$ is the Euler--Mascheroni constant, $\,H_n := \sum_{k=1}^n{\frac{1}{k}}\,$ being the $n$-th harmonic number. We also show how to generalize Eq.~\eqref{eq:eta0} and how Eq.~\eqref{eq:Farhi1} can be taken into account for evaluating some logarithmic series and other related integrals, e.g. $\,\int_0^{1/2}{\ln{\Gamma(x)} \, \sin{(2 \pi x)} \, d x}$, $\int_0^1{\ln{\Gamma(x)} \, \sin{(\pi x)} \, d x}$, $\int_0^1{\psi(x) \, \sin^2{\!(\pi x)} \, dx}$, $\int_0^1{\ln^2{\Gamma(x)} \, d x}$, and $\,\int_0^{1/2}{\psi(x) \, \sin^2{\!(\pi x)} \, d x}$, where $\,\psi(x) := \frac{d }{dx} \ln{\Gamma(x)}\,$ is the digamma function.

\section{Farhi's constant $\eta$ and some related integrals and series}

\begin{lemma}[Farhi's constant]
\label{lema:eta}
The exact closed-form
\begin{eqnarray*}
\eta = \frac{\:\gamma +\ln{(2 \pi)}}{\pi}
\end{eqnarray*}
holds, where $\,\eta\,$ is the definite integral in Eq.~\eqref{eq:eta0}.
\end{lemma}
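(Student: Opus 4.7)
The plan is to match the $x\to 0^+$ asymptotic expansions of both sides of Farhi's formula \eqref{eq:Farhi1} and read off $\eta$ from the resulting identity. From the standard Taylor series $\ln\Gamma(x) = -\ln x - \gamma x + O(x^2)$ and the elementary expansions of the non-series terms on the right (using $\ln\sin(\pi x) = \ln(\pi x) + O(x^2)$), the $-\tfrac12 \ln x$ singularities on the two sides match automatically, and \eqref{eq:Farhi1} reduces to
\[
\tfrac{1}{\pi}\sum_{n=1}^\infty \frac{\ln n}{n}\sin(2\pi n x) = -\tfrac{1}{2}\ln x - \tfrac{\pi\eta}{2} + (\pi\eta-\gamma)\,x + O(x^2), \qquad x\to 0^+.
\]
So $\eta$ will be determined once I independently evaluate the constant part of the small-$x$ asymptotic of $S(x):=\sum_{n\ge 1}\ln n\,\sin(2\pi n x)/n$.

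For that asymptotic I would use the identification $S(x) = -\Im\bigl[\partial_s \mathrm{Li}_s(e^{2\pi i x})\bigr]_{s=1}$ together with the Lerch--Hurwitz--Mellin expansion
\[
\mathrm{Li}_s(e^\mu) = \Gamma(1-s)(-\mu)^{s-1} + \sum_{k\ge 0} \frac{\zeta(s-k)}{k!}\mu^k, \qquad |\mu|<2\pi,
\]
expanding $\Gamma(1-s)$, $(-\mu)^{s-1}$, and $\zeta(s)$ to second order in $s-1$. The simple poles of $\Gamma(1-s)$ and $\zeta(s)$ cancel and the first derivative at $s=1$ contains both $-\gamma\ln(-\mu)$ and (crucially) $-\tfrac12 \ln^2(-\mu)$; the $k=1$ contribution brings in $\zeta'(0)\mu = -\tfrac12\ln(2\pi)\mu$. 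Setting $\mu = 2\pi i x$ and taking imaginary parts yields
\[
S(x) = -\tfrac{\pi}{2}\ln x - \tfrac{\pi(\gamma+\ln(2\pi))}{2} + O(x).
\]

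Matching constant terms with the expression from Farhi then forces $-\pi\eta/2 = -(\gamma+\ln(2\pi))/2$, whence $\eta = (\gamma+\ln(2\pi))/\pi$; the $O(x)$ coefficients furnish a cross-check. The main obstacle is the delicate Laurent-expansion bookkeeping at the polylogarithm's $s=1$ pole: since $\Gamma(1-s)$ and $\zeta(s)$ are individually singular there, the key $-\tfrac12\ln^2(-\mu)$ term---whose imaginary part supplies the $-\tfrac{\pi}{2}\ln x$ singularity of $S(x)$ needed to cancel the $-\ln x$ of $\ln\Gamma$---emerges only after careful cross-term cancellation, with Lerch's identity $\zeta'(0) = -\tfrac12\ln(2\pi)$ as the essential arithmetic input. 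A considerable shortcut is available by invoking Kummer's classical Fourier series for $\ln\Gamma$, in which the coefficient of $(\tfrac12-x)$ is already $\gamma+\ln(2\pi)$; direct comparison with \eqref{eq:Farhi1} then gives the lemma by inspection.
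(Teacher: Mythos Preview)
Your argument is correct but follows a genuinely different route from the paper's. The paper evaluates Farhi's formula at the single point $x=\tfrac14$, obtaining an equation that links $\eta$ to the alternating log-series $S=\sum_{m\ge 1}(-1)^{m+1}\ln(2m-1)/(2m-1)$; it then imports Coffey's closed form for $\gamma_1(\tfrac14)-\gamma_1(\tfrac34)$ (a nontrivial Stieltjes-constant identity) to evaluate $S$, whereupon $\ln\Gamma(\tfrac14)$ cancels and $\eta$ drops out. Your approach instead extracts $\eta$ from the $x\to 0^+$ asymptotics, computing the constant term of $S(x)=\sum_{n\ge1}\tfrac{\ln n}{n}\sin(2\pi n x)$ via the Lindel\"of--Wirtinger expansion of $\mathrm{Li}_s(e^\mu)$ differentiated at $s=1$. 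The trade-off: the paper's proof is computationally lighter but leans on an external and rather deep input (Coffey's $\gamma_1$ identity), whereas your route is essentially self-contained---the only arithmetic fact needed is Lerch's $\zeta'(0)=-\tfrac12\ln(2\pi)$---at the cost of the Laurent bookkeeping you flag, where the cancellation of the simple poles of $\Gamma(1-s)$ and $\zeta(s)$ must be carried to second order. Your closing remark about Kummer's series is exactly the shortcut the paper itself mentions immediately after the lemma; either way, both proofs are really repackagings of Kummer's classical expansion.
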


\begin{proof}
On taking $\,x=\frac14\,$ in Farhi's formula, our Eq.~\eqref{eq:Farhi1}, one finds
\begin{equation}
\ln{\Gamma\!\left(\frac14 \right)} = \frac12 \ln{\pi} + \frac14 \pi \, \eta +\frac14 \ln{2} +\frac{1}{\pi} \,S \, ,
\label{eq:inicio}
\end{equation}
where
\begin{eqnarray}
S := \sum_{n=1}^\infty{\frac{\,\ln{n}}{n} \, \sin{\!\left(\frac{\pi}{2} \, n \right)}} = \sum_{m=1}^\infty{(-1)^{m+1} \: \frac{\,\ln{(2m -1)}}{2m -1}} \, ,
\end{eqnarray}
which agrees with Corollary~3 of Ref.~\cite{Farhi}. Now, we take into account a functional relation established by Coffey in Eq.~(3.13b) of Ref.~\cite{Coffey2011}, namely\footnote{We correct here a mistake in the Stieltjes constants at the left-hand side of the equation corresponding to this one, as it appears in Ref.~\cite{Coffey2011}.}
\begin{equation}
\gamma_1\!\left( \frac{a +1}{2} \right) -\gamma_1\!\left( \frac{a}{2} \right) = \ln{2} \, \left[ \psi\!\left( \frac{a+1}{2} \right) -\psi\!\left( \frac{a}{2} \right) \right] +2\,\sum_{k=0}^\infty{(-1)^{k+1}\:\frac{\ln{(k+a)}}{k+a}} \, ,
\label{eq:zeta}
\end{equation}
where $\,\gamma_1(a)\,$ is the first generalized Stieltjes constant, a coefficient in the Laurent series expansion of the Hurwitz zeta function $\,\zeta{(s,a)}\,$ about its simple pole at $\,s=1$,\footnote{Here, $\zeta{(s,a)} := \sum_{n=0}^\infty{1/(n +a)^s}$, $a \ne 0, -1, -2, \ldots$, a series that converges for all complex $s$ with $\,\Re{(s)} > 1$.}  i.e. $\frac{1}{s-1} +\sum_{n=0}^\infty{\frac{(-1)^n}{n!} \, \gamma_n(a)\,(s-1)^n}$.
For $\,a=\frac12$, one has
\begin{equation}
S = \frac14 \left[ \gamma_1\!\left( \frac14 \right) -\gamma_1\!\left( \frac34 \right) +\pi\,\ln{4} \right] ,
\label{eq:S}
\end{equation}
On the other hand, in Eq.~(3.21) of Ref.~\cite{Coffey2011} Coffey showed that
\begin{eqnarray}
\gamma_1\!\left( \frac14 \right) -\gamma_1\!\left( \frac34 \right) = -\,\pi \left[ \ln{(8 \pi)} +\gamma -2\,\ln{\!\left( \frac{\Gamma(1/4)}{\Gamma(3/4)} \right)}\right] \nonumber \\
= \pi \left[4 \,\ln{\Gamma\!\left( \frac14 \right)} -2\,\ln{4} -3\,\ln{\pi} -\gamma \right] ,
\label{eq:cof}
\end{eqnarray}
where the last step demanded the use of the reflection formula for $\Gamma(x)$, namely
\begin{equation}
\Gamma(1-x) \; \Gamma(x) = \frac{\pi}{\,\sin{(\pi x)}} \, .
\label{eq:gamareflex}
\end{equation}
On substituting the result of Eq.~\eqref{eq:cof} in Eq.~\eqref{eq:S}, one finds
\begin{equation}
S = \pi \, \ln{\Gamma\!\left( \frac14 \right)} -\frac{\pi}{2} \ln{2} -\frac34 \,\pi \ln{\pi} -\frac{\pi}{4} \, \gamma \, .
\end{equation}
The proof completes by substituting this expression in Eq.~\eqref{eq:inicio}. \qed
\end{proof}

The closed-form expression for $\,\eta\,$ established above could also have been deduced from Eq.~(6.443.1) of Ref.~\cite{Gradsh} (the case $\,n=1$), or it could have been determined by comparing Eq.~\eqref{eq:Farhi1} with the Kummer's Fourier series for $\,\ln{\Gamma(x)}\,$ mentioned by Connon in Eq.~(2.9) of Ref.~\cite{Connon2009} (see also Eq.~(5.8) of Ref.~\cite{Connon2012}), namely
\begin{equation}
\ln{\Gamma(x)} = \frac12 \ln{\!\left(\frac{\pi}{\,\sin{(\pi x)}}\right)} +\left(\frac12 - x \right) [\gamma +\ln{(2 \pi)}] +\frac{1}{\pi} \sum_{n=1}^\infty{\frac{\ln{n}}{n} \, \sin{(2 \pi n x)}} \, .
\label{eq:Connon1}
\end{equation}
Note that in Corollary~9.6.53 of Ref.~\cite{bk:Cohen} Cohen uses Abel summation to derive a formula similar to that by Connon which holds for all real $\,x \not \in \mathbb{Z}$. Another generalization of Farhi's formula was proposed by Blagouchine in Ex.~20(c) of Ref.~\cite{Blago2014}, namely
\begin{eqnarray}
\sum_{n=1}^\infty{\frac{\,\ln{(b\,n)}}{n} \, \sin{(n\,\phi)}} = \pi \, \ln{\Gamma\!\left(\frac{\phi}{2 \pi} \right)} +\frac{\pi}{2} \, \ln{\sin{\!\left(\frac{\,\phi}{2}\right)}} -\frac{\pi}{2} \, \ln{\pi} \nonumber \\
+ \, \frac{\:\phi -\pi}{2} \left[ \gamma +\ln{\!\left(\!\frac{\,2 \pi}{b}\right)} \right] ,
\label{eq:Blag1}
\end{eqnarray}
which is valid for all $\,b>0\,$ and $\,\phi \in (0, 2 \pi)$. On substituting $\,\phi = 2 \pi x$, one shows that Connon's formula is the particular case $\,b=1\,$ of Eq.~\eqref{eq:Blag1}.

Since the Farhi constant $\eta$ is defined as an integral, other similar integrals can be explored, as, for instance,

\begin{theorem}[An integral involving $\,\psi(x)\,$]
\label{teo:digamma1}
The following exact closed-form result holds:
\begin{eqnarray*}
\int_0^1{\psi(x) \: \sin^2{\!(\pi x)} \: dx} = -\,\frac{\:\gamma +\ln{(2 \pi)}\,}{2} \, .
\end{eqnarray*}
\end{theorem}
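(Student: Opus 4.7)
The plan is to reduce the integral directly to $\eta$ by a single integration by parts, and then invoke Lemma~\ref{lema:eta}. Since $\psi(x) = \frac{d}{dx}\ln\Gamma(x)$, the natural choice is $u = \sin^2(\pi x)$ with $dv = \psi(x)\,dx$, giving $v = \ln\Gamma(x)$ and $du = \pi\sin(2\pi x)\,dx$ via the double-angle identity $2\sin(\pi x)\cos(\pi x) = \sin(2\pi x)$.

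After integrating by parts, I will need to argue that the boundary term $[\ln\Gamma(x)\sin^2(\pi x)]_0^1$ vanishes. At $x=1$ this is immediate since $\ln\Gamma(1)=0$ and $\sin(\pi)=0$. At $x=0$ the factor $\ln\Gamma(x)$ blows up like $-\ln x$, but $\sin^2(\pi x) \sim \pi^2 x^2$, so the product tends to $0$; this is the only step requiring a brief justification. What remains is
\begin{equation*}
\int_0^1 \psi(x)\sin^2(\pi x)\,dx = -\pi \int_0^1 \ln\Gamma(x)\sin(2\pi x)\,dx.
\end{equation*}

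Comparing with the definition $\eta := 2\int_0^1 \ln\Gamma(x)\sin(2\pi x)\,dx$ in Eq.~\eqref{eq:eta0}, the right-hand side is exactly $-\pi\eta/2$. Substituting the closed form $\eta = [\gamma + \ln(2\pi)]/\pi$ established in Lemma~\ref{lema:eta} immediately gives $-[\gamma+\ln(2\pi)]/2$, as required.

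There is no real obstacle: the entire argument rests on one integration by parts together with the lemma. The only subtle point is the behavior of $\ln\Gamma(x)\sin^2(\pi x)$ at the left endpoint, which is handled by the standard estimate $\ln\Gamma(x) = -\ln x + O(1)$ as $x\to 0^+$.
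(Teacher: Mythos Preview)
Your proof is correct and uses essentially the same approach as the paper: one integration by parts linking $\int_0^1 \psi(x)\sin^2(\pi x)\,dx$ to $\int_0^1 \ln\Gamma(x)\sin(2\pi x)\,dx$, followed by Lemma~\ref{lema:eta}. Your version is in fact slightly more direct, since the paper integrates by parts starting from $I=\int_0^1 \ln\Gamma(x)\sin(\pi x)\cos(\pi x)\,dx$ with the split $u=\ln\Gamma(x)\sin(\pi x)$, $dv=\cos(\pi x)\,dx$, which regenerates $I$ on the right-hand side and forces one to solve $2I=-\tfrac{1}{\pi}\int_0^1 \psi(x)\sin^2(\pi x)\,dx$, whereas you reach the same identity in one step.
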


\begin{proof}
From Lemma~\ref{lema:eta}, one has
\begin{equation}
\pi \, \eta = 2 \pi \int_0^1{\ln{\Gamma(x)} \left[ 2 \sin{(\pi x)} \, \cos{(\pi x)} \right] \, d x} = \gamma +\ln{(2 \pi)} \, ,
\end{equation}
which implies that
\begin{equation}
I := \int_0^1{\ln{\Gamma(x)} \, \sin{(\pi x)} \, \cos{(\pi x)} \, d x} = \frac{\gamma +\ln{(2 \pi)}}{4 \pi} \, .
\label{eq:defI}
\end{equation}
Integration by parts then yields
\begin{eqnarray}
I = \left[ \ln{\Gamma(x)} \, \frac{\,\sin^2{(\pi x)}}{\pi} \right]_{0^{+}}^1 - \int_0^1{\frac{\,\sin{(\pi x)}}{\pi} \, \left[ \psi(x) \, \sin{(\pi x)} +\pi\,\cos{(\pi x)}\,\ln{\Gamma(x)} \right] d x} \nonumber \\
= \, 0 -\frac{1}{\pi} \int_0^1{\psi(x) \, \sin^2{(\pi x)} \, d x} -\int_0^1{\sin{(\pi x)} \, \cos{(\pi x)} \, \ln{\Gamma(x)} \: d x} \nonumber \\
= - \frac{1}{\pi} \int_0^1{\psi(x) \, \sin^2{(\pi x)} \, d x} -I \, . \qquad
\end{eqnarray}
Therefore
\begin{equation}
2 \, I = -\frac{1}{\,\pi} \int_0^1{\psi(x) \, \sin^2{(\pi x)} \: d x} \, .
\end{equation}
The final result follows by substituting this integral in Eq.~\eqref{eq:defI}. \qed
\end{proof}

A simple closed-form expression can also be determined for a similar integral obtained by halving the argument of the sine in Eq.~\eqref{eq:eta0}, i.e.
\begin{equation}
\int_0^1{\ln{\Gamma(x)} \, \sin{\!(\pi x)} \: dx} = 0.46205312 \ldots
\label{eq:kapa}
\end{equation}

\begin{theorem}[An integral involving $\,\ln{\Gamma{(x)}}\,$]
\label{teo:kapa}
The exact closed-form result
\begin{eqnarray*}
\int_0^1{\ln{\Gamma(x)} \, \sin{\!(\pi x)} \: dx} = \frac{\:\ln{\pi} -\ln{2} +1}{\pi}
\end{eqnarray*}
holds.
\end{theorem}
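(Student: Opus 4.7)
My plan is to bypass the Fourier series of $\ln\Gamma$ and derive the result directly from the reflection formula \eqref{eq:gamareflex}, exploiting the symmetry $\sin(\pi(1-x))=\sin(\pi x)$. After the substitution $x \mapsto 1-x$ in the target integral, adding the result to the original, and applying $\ln\Gamma(x)+\ln\Gamma(1-x) = \ln\pi - \ln\sin(\pi x)$ to the combined integrand, I obtain
\begin{equation*}
2\int_0^1 \ln \Gamma(x)\, \sin(\pi x)\, dx \;=\; \ln \pi \int_0^1 \sin(\pi x)\,dx \;-\; \int_0^1 \sin(\pi x)\, \ln \sin(\pi x)\, dx.
\end{equation*}
The first right-hand integral is elementary: $\int_0^1 \sin(\pi x)\,dx = 2/\pi$, contributing $2\ln \pi/\pi$ to the right-hand side.

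The real work is the log-sine integral $L := \int_0^1 \sin(\pi x)\ln\sin(\pi x)\,dx$. My preferred evaluation uses differentiation under the integral sign: set
\begin{equation*}
J(s) \;:=\; \int_0^\pi \sin^s u\,du \;=\; \sqrt{\pi}\,\Gamma\!\left(\tfrac{s+1}{2}\right)\big/\Gamma\!\left(\tfrac{s}{2}+1\right).
\end{equation*}
After the change of variable $u = \pi x$, one has $L = J'(1)/\pi$. Taking the logarithmic derivative of $J(s)$ and evaluating at $s=1$ gives $J'(1)/J(1) = \tfrac12\bigl[\psi(1)-\psi(3/2)\bigr] = \ln 2 - 1$, where I use the standard values $\psi(1)=-\gamma$ and $\psi(3/2)=2-2\ln 2-\gamma$. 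Since $J(1)=2$, this yields $J'(1)=2(\ln 2 - 1)$ and hence $L = 2(\ln 2 - 1)/\pi$.

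Putting the pieces together, $2\int_0^1 \ln\Gamma(x)\sin(\pi x)\,dx = 2(\ln\pi - \ln 2 + 1)/\pi$, which gives the claim after dividing by two. The main obstacle is the log-sine integral; the Feynman-trick evaluation via $J(s)$ disposes of it in one line, but one could alternatively derive $L$ from the well-known Fourier series $\ln\sin(\pi x) = -\ln 2 - \sum_{n\geq 1} \cos(2\pi n x)/n$ together with a partial-fraction sum, which would keep the argument entirely within the Fourier-analytic framework used throughout the rest of the paper.
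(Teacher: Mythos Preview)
Your proof is correct and follows essentially the same route as the paper: both exploit the reflection formula $\ln\Gamma(x)+\ln\Gamma(1-x)=\ln\pi-\ln\sin(\pi x)$ together with the symmetry $\sin(\pi(1-x))=\sin(\pi x)$ to reduce the problem to the log-sine integral $\int_0^\pi \sin u\,\ln\sin u\,du$. The only difference is that the paper simply quotes the value $\int_0^{\pi/2}\sin\tilde\theta\,\ln\sin\tilde\theta\,d\tilde\theta=\ln 2-1$, whereas you supply an explicit derivation via differentiation of $J(s)=\int_0^\pi\sin^s u\,du=\sqrt{\pi}\,\Gamma((s+1)/2)/\Gamma(s/2+1)$ at $s=1$; your computation of $J'(1)/J(1)=\tfrac12[\psi(1)-\psi(3/2)]=\ln 2-1$ is correct and makes the argument self-contained.
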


\begin{proof}
On taking into account the logarithmic form of Eq.~\eqref{eq:gamareflex}, namely
\begin{equation}
\ln{\Gamma(1-x)} +\ln{\Gamma(x)} = \ln{\pi} -\ln{\sin{(\pi x)}} \, ,
\label{eq:lngamareflex}
\end{equation}
one finds
\begin{eqnarray}
\int_0^1{\left[ \, \ln{\Gamma(1-x)} +\ln{\Gamma(x)} \right] \sin{(\pi x)} \: d x} \nonumber \\
= \int_0^1{\left[ \, \ln{\pi} -\ln{\sin{(\pi x)}}\right] \sin{(\pi x)} \: d x} \nonumber \\
= \ln{\pi} \int_0^1{\sin{(\pi x)} \: d x} -\int_0^1{\ln{\sin{(\pi x)}} \, \sin{(\pi x)} \: d x} \nonumber \\
= \ln{\pi}~\frac{\,2}{\,\pi} -\frac{1}{\pi} \int_0^\pi{\sin{\tilde{\theta}} \; \ln{\sin{\tilde{\theta}}} \: d \tilde{\theta}} \, .
\end{eqnarray}
The symmetries of the first integrand  with respect to $\,x = \frac12\,$ and of $\,\sin{\tilde{\theta}}\,$ with respect to $\,\tilde{\theta} = \pi/2\,$ lead to
\begin{eqnarray}
2 \int_0^1{\ln{\Gamma(x)} \, \sin{\!(\pi x)} \: dx} = \frac{\,2}{\,\pi} \, \ln{\pi} -\frac{2}{\pi} \int_0^{\,\pi/2}{\!\sin{\tilde{\theta}} \: \ln{\sin{\tilde{\theta}}} \; d \tilde{\theta}} \nonumber \\
= 2 \, \frac{\,\ln{\pi}}{\pi} -2\,\frac{\:\ln{2} -1}{\pi} \, .
\end{eqnarray}
\qed
\end{proof}

On searching for other closed-form results, I have realized that the integration of both sides of Farhi's formula, our Eq.~\eqref{eq:Farhi1}, could lead to an interesting result. As usual, $\,\mathrm{Cl}_2(\theta) := \Im{\left[ \mathrm{Li}_2\left(e^{\,i \, \theta}\right) \right]} = \sum_{n=1}^\infty{\sin{(n \, \theta)}/n^2}\,$ is the Clausen function and $\,\zeta'(s,a)\,$ denotes a partial derivative with respect to $s$.

\begin{theorem}[Integration of Farhi's formula]
\label{teo:int}
For all $\,x \in (0 ,1)$,\footnote{In the interval $(0,1)$, this logarithmic series corresponds to $\, - \, \frac12 \: \frac{\partial }{\partial s} \left[ \mathrm{Li}_s\!\left(e^{2 \pi x \,i} \right) +\mathrm{Li}_s\!\left(e^{-2 \pi x \,i} \right) \right]_{s=2}$, a result similar to that in Eq.~(1.18) of Ref.~\cite{Moll}.}
\begin{eqnarray*}
\sum_{n=1}^\infty{\frac{\,\ln{n}}{n^2} \, \cos{(2 \pi n \,x)}} = \pi^2 \left( x\,(1 -x) - \frac16 \right) \left[\,\gamma +\ln{(2 \pi)} -1 \right] +\frac{\pi}{2}\,\mathrm{Cl}_2(2 \pi x) \nonumber \\
-\,2 \pi^2\,\zeta'(-1, x) \, .
\end{eqnarray*}
\end{theorem}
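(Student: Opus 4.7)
The plan is to integrate both sides of Farhi's formula \eqref{eq:Farhi1} from $0$ to $x$, after replacing $\pi\eta$ with the closed form $\gamma+\ln(2\pi)$ established in Lemma~\ref{lema:eta}. On the right-hand side, the affine polynomial contributes $\tfrac{1}{2}[\gamma+\ln(2\pi)]\,x(1-x)$; the $-\tfrac12\ln\sin(\pi t)$ term evaluates via the classical Clausen antiderivative $\int_0^x \ln(2\sin(\pi t))\,dt = -\mathrm{Cl}_2(2\pi x)/(2\pi)$, producing $\tfrac{1}{4\pi}\mathrm{Cl}_2(2\pi x)+\tfrac{x}{2}\ln 2$; and the sine Fourier series admits termwise integration (justified because after integration the coefficients decay as $\ln n/n^2$ and the series converges absolutely), giving $-\tfrac{\zeta'(2)}{2\pi^2}-\tfrac{1}{2\pi^2}\sum_{n\geq 1}\tfrac{\ln n}{n^2}\cos(2\pi n x)$, which already isolates the target sum.

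For the left-hand side I would express $\int_0^x \ln\Gamma(t)\,dt$ in terms of $\zeta'(-1,x)$ through the identity
\[
\int_0^x \ln\Gamma(t)\,dt = \zeta'(-1,x) - \zeta'(-1) + \frac{x(1-x)}{2} + \frac{x}{2}\ln(2\pi),
\]
which follows by integrating $\partial_x\zeta'(-1,x) = \zeta'(0,x) - \zeta(0,x)$ (itself a consequence of $\partial_x\zeta(s,x) = -s\,\zeta(s+1,x)$), together with Lerch's formula $\zeta'(0,x) = \ln\Gamma(x)-\tfrac12\ln(2\pi)$ and $\zeta(0,x) = \tfrac12-x$. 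Equating both sides, the piece $\tfrac{x}{2}\ln\pi+\tfrac{x}{2}\ln 2 = \tfrac{x}{2}\ln(2\pi)$ cancels against the matching linear term on the left, the two $\tfrac{x(1-x)}{2}$ contributions combine into $\tfrac{x(1-x)}{2}[\gamma+\ln(2\pi)-1]$, and solving for the sum reproduces the stated right-hand side modulo an additive numerical constant $2\pi^2\zeta'(-1)-\zeta'(2)$.

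The last step, which I expect to be the main obstacle, is to recognise that this leftover constant equals $-\tfrac{\pi^2}{6}[\gamma+\ln(2\pi)-1]$, so that the coefficient $\pi^2 x(1-x)$ is promoted to $\pi^2[x(1-x)-\tfrac16]$ as in the statement. For this I would differentiate the Riemann functional equation
\[
\zeta(s) = 2^s\,\pi^{s-1}\sin(\pi s/2)\,\Gamma(1-s)\,\zeta(1-s)
\]
once with respect to $s$ and evaluate at $s=-1$; using $\sin(-\pi/2)=-1$, $\cos(-\pi/2)=0$, $\Gamma(2)=1$ and $\psi(2)=1-\gamma$, a short manipulation yields exactly $2\pi^2\zeta'(-1)-\zeta'(2) = -\tfrac{\pi^2}{6}[\gamma+\ln(2\pi)-1]$, which closes the calculation. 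Beyond this zeta-functional-equation identity, the remaining work is bookkeeping of elementary integrals; the conceptual heart of the proof is the clean interlocking of Alexeiewsky's representation of $\int_0^x \ln\Gamma$, the Clausen antiderivative of $\ln\sin$, and the derivative of the Riemann functional equation at $s=-1$.
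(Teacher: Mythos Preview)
Your proof is correct and follows the same overall route as the paper: integrate Farhi's formula termwise from $0$ to $x$, identify the left-hand side via the Alexeiewsky/Adamchik representation $\int_0^x\ln\Gamma(t)\,dt=\zeta'(-1,x)-\zeta'(-1)+\tfrac{x(1-x)}{2}+\tfrac{x}{2}\ln(2\pi)$, and use the Clausen antiderivative for $\int_0^x\ln\sin(\pi t)\,dt$. The one substantive difference is in how the leftover additive constant $2\pi^2\zeta'(-1)-\zeta'(2)$ is evaluated. The paper introduces the Glaisher--Kinkelin constant $A$ via Glaisher's identity $-\zeta'(2)=\tfrac{\pi^2}{6}[12\ln A-\gamma-\ln(2\pi)]$ together with $\zeta'(-1)=\tfrac{1}{12}-\ln A$, and then lets the two occurrences of $\ln A$ cancel; you instead differentiate the Riemann functional equation once at $s=-1$ to obtain the combination directly, never invoking $A$. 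Your route is slightly more self-contained (you also derive the Adamchik formula from Lerch's identity rather than citing it), while the paper's has the virtue of displaying the explicit link to Glaisher's 1894 evaluation of $\sum_{n\ge1}\ln n/n^2$.
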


\begin{proof}
The integration of both sides of Eq.~\eqref{eq:Farhi1} from $0$ to $x$, for any $\,x \in (0,1)$, yields
\begin{eqnarray}
\int_0^x{\ln{\Gamma(t)} \, d t} = \frac{\ln{\pi}}{2} \, x +\frac{\pi \, \eta}{2}\,x -\frac{\pi \,\eta}{2} \,x^2 -\frac12 \int_0^x{\ln{\sin{(\pi t)}} \, d t} \nonumber \\
+\,\frac{1}{\pi} \int_0^x{ \left[\,\sum_{n=1}^\infty{\frac{\ln{n}}{n} \sin{(2 \pi n \,t)} } \right] d t} \nonumber \\
= \left[\frac{\ln{\pi}}{2} +\frac{\pi \eta}{2} \right] x -\frac{\pi \eta}{2} \,x^2 -\frac12 \int_0^x{\ln{\sin{(\pi t)}} \, d t} \nonumber \\
+\,\frac{1}{\pi} \sum_{n=1}^\infty{\frac{\ln{n}}{n} \int_0^x{\sin{(2 \pi n \,t)} \, d t}} \, ,
\label{eq:ini0}
\end{eqnarray}
where $\,\pi\,\eta = \gamma +\ln{(2 \pi)}$, as proved in Lemma~\ref{lema:eta}. The absolute convergence of the last series above, for all $\,x \in (0,1)$, is sufficient for the application of Fubini's theorem --- see, e.g., the corollary of Theorem~7.16 in Ref.~\cite{bk:Rudin} ---, which validates the interchange of the integral and the series in the last step. Now, let us solve each integral above separately. Firstly, by definition, $\int_0^x{\ln{\Gamma(t)} \, d t} = \psi^{(-2)}(x)$, known as the \emph{negapolygamma} function, for which Adamchik showed in Ref.~\cite{Adamchik}, as a particular case of his Eq.~(14), that
\begin{equation}
\psi^{(-2)}(x) = \frac{x \, (1 -x)}{2} +\frac{x}{2}\,\ln{(2 \pi)} -\zeta'(-1) +\zeta'(-1, x) \, ,
\label{eq:negapol2}
\end{equation}
where $\,\zeta'(-1) = 1/12 -\ln{A}$, $\,A\,$ being the Glaisher-Kinkelin constant.\footnote{In virtue of Eq.~(5.2) of Ref.~\cite{Connon2009}, namely $\,\zeta'(-1) -\zeta'(-1,t) = \ln{G(1+t)} -t\,\ln{\Gamma(t)}$, it is possible to write Eq.~\eqref{eq:negapol2} in terms of the Barnes $G$-function, but we shall not explore this form here.} Secondly,
\begin{equation}
\int_0^{\,x}{\ln{\sin{(\pi t)}} \, d t} = -\,\frac{\,\mathrm{Cl}_2(2 \pi x)\,}{2 \pi} - x \, \ln{2} \, .
\end{equation}
Finally, on substituting $\,\int_0^x{\sin{(2 \pi n \,t)} \, d t} = -\,[\cos{(2 \pi n \,x)} -1]/(2 \pi n)\,$ in the last series in Eq.~\eqref{eq:ini0}, one finds
\begin{eqnarray}
\sum_{n=1}^\infty{\frac{\ln{n}}{n} \int_0^x{\sin{(2 \pi n \,t)} \, d t}} = -\frac{1}{2 \pi} \, \sum_{n=1}^\infty{\frac{\ln{n}}{n^2} \, [\cos{(2 \pi n \,x)}-1] } \nonumber \\
= -\frac{1}{\,2 \pi} \left[ \, \sum_{n=1}^\infty{\frac{\ln{n}}{n^2} \, \cos{(2 \pi n \,x)}} -\sum_{n=1}^\infty{\frac{\ln{n}}{n^2}} \right] ,
\end{eqnarray}
which can be further simplified by noting that
\begin{equation}
\sum_{n=1}^\infty{\frac{\ln{n}}{n^2}} = -\,\zeta'(2) = \frac{\,\pi^2}{6} \left[ 12 \, \ln{A} -\gamma -\ln{(2 \pi)} \right] ,
\label{eq:somaln}
\end{equation}
as shown by Glaisher in 1894~\cite{Glaisher}. The proof completes by substituting the four closed-form expressions above there in Eq.~\eqref{eq:ini0}. \qed
\end{proof}

\begin{remark}
On putting $\,x = \frac12\,$ (or $\,x = \frac14$) in Theorem~\ref{teo:int}, one finds
\begin{eqnarray*}
\sum_{n=1}^\infty{(-1)^n \, \frac{\,\ln{n}}{n^2}} = \frac{\,\pi^2}{12} \, \left[\,\gamma +\ln{(2 \pi)} -1 \right] -2 \pi^2 \, \zeta'\!\left(-1, \frac12 \right) \nonumber \\
= \frac{\:\pi^2}{12} \left[\,\gamma +\ln{(4 \pi)} -12\,\ln A \,\right] .
\end{eqnarray*}
\end{remark}

Since the function $\,\cos{(2 \pi n\,x)}$, $x \in (0,1)$, remains the same when we exchange $\,x\,$ by $\,1-x$, then both the first term in the right-hand side and the series in Theorem~\ref{teo:int} also do, whereas $\,\mathrm{Cl}_2(2 \pi x)\,$ changes the sign. This simple observation leads to

\begin{corollary}[Reflection formula for $\,\zeta'{(-1,x)}\,$]
For all $\,x \in [0,1]$,
\begin{eqnarray*}
\zeta'{(-1, x)} -\zeta'{(-1, 1-x)} = \frac{\,\mathrm{Cl}_2(2 \pi x)}{2 \pi} \, .
\end{eqnarray*}
\end{corollary}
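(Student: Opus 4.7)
The plan is to apply Theorem~\ref{teo:int} at both $x$ and $1-x$ and subtract, exploiting the parity observations already highlighted in the paragraph preceding the corollary. First, I would verify the two symmetries: the identity $\cos(2\pi n(1-x)) = \cos(2\pi n x)$ for every integer $n$ shows that the left-hand side series in Theorem~\ref{teo:int}, as well as the factor $x(1-x) - \tfrac16$, is invariant under $x \mapsto 1-x$. On the other hand, the Clausen function is odd and $2\pi$-periodic, so
\begin{equation*}
\mathrm{Cl}_2(2\pi(1-x)) = \mathrm{Cl}_2(2\pi - 2\pi x) = \mathrm{Cl}_2(-2\pi x) = -\mathrm{Cl}_2(2\pi x).
\end{equation*}

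Next I would write the identity of Theorem~\ref{teo:int} twice, once evaluated at $x$ and once at $1-x$, and subtract the second from the first. The invariant pieces cancel, and the Clausen contribution doubles, producing
\begin{equation*}
0 = \pi\,\mathrm{Cl}_2(2\pi x) - 2\pi^2\bigl[\zeta'(-1,x) - \zeta'(-1,1-x)\bigr],
\end{equation*}
which upon dividing by $2\pi^2$ yields the claimed reflection formula for every $x \in (0,1)$.

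Finally, I would extend the identity to the closed interval $[0,1]$: since $\mathrm{Cl}_2$ is continuous and vanishes at multiples of $\pi$, the right-hand side tends to $0$ as $x\to 0^{+}$ or $x\to 1^{-}$, and the map $x\mapsto \zeta'(-1,x)$ is continuous on $[0,1]$, so the $x=0$ and $x=1$ cases follow by continuity (and trivially, since both sides reduce to $\zeta'(-1,0) - \zeta'(-1,1) = 0$ there). I do not anticipate any genuine obstacle: the only point requiring a modicum of care is making the parity statement for $\mathrm{Cl}_2$ explicit, which is immediate from its series definition.
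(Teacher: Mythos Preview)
Your proposal is correct and follows essentially the same route as the paper: it is precisely the symmetry argument sketched in the paragraph preceding the corollary, namely applying Theorem~\ref{teo:int} at $x$ and $1-x$, cancelling the even pieces, and using the sign change of $\mathrm{Cl}_2$. Your treatment of the endpoints by continuity matches the paper's remark that $\zeta'(-1,0)=\zeta'(-1,1)$ and $\mathrm{Cl}_2(0)=\mathrm{Cl}_2(2\pi)=0$.
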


This formula corresponds to the case $\,n=1\,$ of Eq.~(21) in Ref.~\cite{Miller}.  Note that it remains valid at both endpoints $\,x=0\,$ and $\,x=1\,$ because $\,\zeta'(-1,0) = \zeta'(-1,1) = \zeta'(-1)\,$ and $\,\mathrm{Cl}_2(0) = \mathrm{Cl}_2(2 \pi) = 0$.

\begin{remark}
For instance, for $\,x=\frac14$ the reflection formula yields
\begin{eqnarray*}
\zeta'{\left(-1, \frac14 \right)} -\zeta'{\left(-1, \frac34 \right)} = \frac{\mathrm{Cl}_2(\pi/2)}{2 \pi} = \frac{G}{2 \pi} \, ,
\end{eqnarray*}
where $\,G := \sum_{n=0}^\infty{(-1)^n/(2n +1)^2}\,$ is Catalan's constant.
\end{remark}

The presence of the factor $\,\sin{(2 \pi x)}\,$ accompanying the log-Gamma function in Eq.~\eqref{eq:eta0} suggests that further results can be found by multiplying both sides of Farhi's formula by that factor before integration. Then, let us define the real function
\begin{equation}
\eta(x) := 2 \int_0^{\,x}{\ln{\Gamma(t)}\,\sin{(2 \pi t)} \: d t} \, , \quad x \in (0,1) \, ,
\label{eq:etax0}
\end{equation}
as a generalization of Farhi's constant $\eta$.

\begin{theorem}[Another integral of Farhi's formula]
\label{teo:int2}
For all $\,x \in (0 ,1)$, one has
\begin{eqnarray*}
\eta(x) = \frac{\,\gamma +\ln{(2 \pi)}}{\pi} \left[ \sin^2{\!\left(\frac{\theta}{2}\right)} +\frac{\theta\,\cos{\theta} -\sin{\theta}}{2 \pi} \right] +\frac{\,\sin^2{\!(\theta/2)}}{2 \pi} \left[1 +2 \, \ln{\!\left(\frac{\pi}{\sin{(\theta/2)}}\right)} \right] \nonumber \\
+\,\frac{\,\cos{\theta}}{\pi^2} \, \sum_{n=2}^\infty{\frac{\ln{n}}{\,n \, (n^2-1)} \, \sin{(n \theta)}} \,-\frac{\,\sin{\theta}}{\pi^2} \, \sum_{n=2}^\infty{\frac{\ln{n}}{\: n^2 -1} \, \cos{(n \theta)}} \, ,
\end{eqnarray*}
where $\,\theta = 2 \pi x$.
\end{theorem}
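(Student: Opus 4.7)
The plan is to multiply Farhi's formula \eqref{eq:Farhi1} by $\sin(2\pi t)$ and integrate over $(0,x)$, producing four pieces: a constant $\tfrac12\ln\pi$ piece, the linear-in-$t$ piece coming from $\pi\eta(\tfrac12-t)$, the $-\tfrac12\ln\sin(\pi t)$ piece, and the Fourier sine series. Since $\ln(1)=0$, the $n=1$ term of the series drops out, which avoids having to regularise the otherwise ill-defined $\sin((n-1)\theta)/(n-1)$. After multiplying by the prefactor $2$ from \eqref{eq:etax0}, I would collect the resulting expressions and match them against the four ingredients of the claim.

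For the elementary pieces I would use $\int_0^x\sin(2\pi t)\,dt=\sin^2(\theta/2)/\pi$ with $\theta=2\pi x$, and an integration by parts for $\int_0^x(\tfrac12-t)\sin(2\pi t)\,dt$, yielding $\sin^2(\theta/2)/(2\pi)+(\theta\cos\theta-\sin\theta)/(4\pi^2)$. Multiplying the latter by $2\pi\eta=\gamma+\ln(2\pi)$ (Lemma~\ref{lema:eta}, times $2$ from \eqref{eq:etax0}) delivers the entire first bracket $\tfrac{\gamma+\ln(2\pi)}{\pi}\bigl[\sin^2(\tfrac{\theta}{2})+\tfrac{\theta\cos\theta-\sin\theta}{2\pi}\bigr]$. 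The $-\tfrac12\ln\sin(\pi t)$ term, once multiplied by $2\sin(\pi t)\cos(\pi t)=\sin(2\pi t)$, becomes tractable through the substitution $u=\sin(\pi t)$, reducing it to $-\frac{2}{\pi}\int_0^{\sin(\theta/2)}u\ln u\,du=-\frac{\sin^2(\theta/2)}{\pi}\ln\sin(\theta/2)+\frac{\sin^2(\theta/2)}{2\pi}$. Combining this with the leftover contribution $\frac{\sin^2(\theta/2)\ln\pi}{\pi}$ coming from the $\tfrac12\ln\pi$ constant in Farhi's formula gives precisely the second bracket $\frac{\sin^2(\theta/2)}{2\pi}\bigl[1+2\ln(\pi/\sin(\theta/2))\bigr]$.

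For the series, I would invoke Fubini term by term: absolute convergence of the termwise-integrated series (whose coefficients decay like $\ln n/n^3$ thanks to the factors $1/(n^2-1)$ obtained below) justifies exchanging sum and integral, as was already done in Theorem~\ref{teo:int}. Using the product-to-sum identity $2\sin(2\pi n t)\sin(2\pi t)=\cos(2\pi(n-1)t)-\cos(2\pi(n+1)t)$ and integrating yields, for $n\ge 2$,
\begin{equation*}
\int_0^x \sin(2\pi n t)\sin(2\pi t)\,dt=\frac{1}{4\pi}\left[\frac{\sin((n-1)\theta)}{n-1}-\frac{\sin((n+1)\theta)}{n+1}\right].
\end{equation*}
Expanding $\sin((n\mp 1)\theta)=\sin(n\theta)\cos\theta\mp\cos(n\theta)\sin\theta$ and combining over the common denominator $n^2-1$ produces $\frac{\cos\theta\sin(n\theta)}{2\pi(n^2-1)}-\frac{n\sin\theta\cos(n\theta)}{2\pi(n^2-1)}$. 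Multiplying by $\frac{2\ln n}{\pi n}$ and summing over $n\ge 2$ reproduces exactly the two remaining series in the statement, with the factor of $n$ in the second numerator cancelling the $n$ in the denominator of the overall prefactor $\ln n/n$.

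I do not expect a genuine obstacle: every step is a standard elementary manipulation, and the interchange of sum and integral is already justified in the paper. The only thing demanding care is the bookkeeping when merging the $\ln\pi$ coming from the constant term of Farhi's formula with the $-\ln\sin(\theta/2)$ produced by the log-sine integral, so as to present the second bracket in the compact form $1+2\ln(\pi/\sin(\theta/2))$ displayed in the theorem.
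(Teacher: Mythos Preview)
Your proposal is correct and follows essentially the same route as the paper: multiply Farhi's formula by $\sin(2\pi t)$, integrate termwise over $(0,x)$, evaluate the four resulting pieces via $\int_0^\theta\tilde\theta\sin\tilde\theta\,d\tilde\theta$, the substitution $u=\sin(\pi t)$ for the log-sine term, and the product-to-sum identity followed by the angle-addition expansion for the series. The only cosmetic difference is that the paper first changes variables to $\tilde\theta=2\pi t$ before computing each elementary integral, whereas you work in $t$ throughout; the bookkeeping and the justification of the sum--integral interchange are identical.
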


\begin{proof}
The multiplication of both sides of Eq.~\eqref{eq:Farhi1} by $\,\sin{(2 \pi t)}$, followed by integration from $0$ to $x$, for any $\,x \in (0,1)$, yields
\begin{eqnarray}
\int_0^{\,x}{\ln{\Gamma(t)} \, \sin{(2 \pi t)}\, d t} \nonumber \\
= \frac{\,\ln{\pi}}{4 \pi} \int_0^{\,\theta}{\sin{\tilde{\theta}} \, d \tilde{\theta}} +\frac{\gamma +\ln{(2 \pi)}}{4 \pi} \int_0^{\,\theta}{\!\left(1 -\frac{\tilde{\theta}}{\pi} \right) \sin{\tilde{\theta}} \, d \tilde{\theta}} \nonumber \\
-\,\frac{1}{\,2 \pi} \int_0^{\,\theta}{\ln{\!\left[\sin{\!\left(\frac{\tilde{\theta}}{2}\right)}\right]} \sin{\!\left( \frac{\tilde{\theta}}{2} \right)} \cos{\!\left( \frac{\tilde{\theta}}{2} \right)} \, d \tilde{\theta}} \nonumber \\
+ \, \frac{1}{\,2 \pi^2} \sum_{n=2}^\infty{\,\frac{\,\ln{n}}{n} \, \int_0^\theta{\sin{\!\left(n \tilde{\theta}\right)} \, \sin{\tilde{\theta}} \: d \tilde{\theta}}} \, ,
\label{eq:ini1}
\end{eqnarray}
where $\,\theta = 2 \pi x\,$ and $\tilde{\theta}$ is just a dummy variable. On applying the trigonometric identity $\,\sin{\alpha}\,\sin{\beta} = \frac12 \, [\cos{(\alpha -\beta)} -\cos(\alpha +\beta)]\,$ to the last integral above, one finds
\begin{eqnarray}
2 \int_0^{\,x}{\ln{\Gamma(t)} \, \sin{(2 \pi t)}\, d t} = \frac{\ln{\pi}}{2 \pi} \, (1 -\cos{\theta}) \nonumber \\
+\frac{\gamma +\ln{(2 \pi)}}{2 \pi} \left(\!1 -\cos{\theta} -\frac{1}{\pi} \int_0^{\,\theta}{\!\tilde{\theta}\,\sin{\tilde{\theta}} \, d \tilde{\theta}} \right) -\,\frac{2}{\pi} \int_0^{\,b}{u \, \ln{u} \: d u} \nonumber \\
+\,\frac{1}{\,2 \pi^2} \sum_{n=2}^\infty{\frac{\,\ln{n}}{n} \int_0^{\,\theta}{\!\!\left\{ \cos{\!\left[(n-1) \,\tilde{\theta}\right]} -\cos{\!\left[(n+1) \,\tilde{\theta}\right]} \right\} d \tilde{\theta}}} \, ,
\label{eq:2}
\end{eqnarray}
where $\,b = \sin{(\theta/2)}$. The remaining integrals can be solved in terms of elementary functions:
\begin{eqnarray}
\int_0^\theta{\tilde{\theta} \, \sin{\tilde{\theta}} \, d \tilde{\theta}} = \sin{\theta} -\theta \, \cos{\theta} \, , \\
\int_0^b{u \, \ln{u} \, d u} = \frac{\,b^2}{4} \: (2 \ln{b} -1) \, ,
\end{eqnarray}
and
\begin{eqnarray}
\int_0^{\,\theta}{\!\!\left\{\,\cos{\left[(n-1) \,\tilde{\theta}\right]} -\cos{\left[(n+1) \,\tilde{\theta}\right]} \right\} d \tilde{\theta}} = \frac{\,\sin{[(n-1)\,\theta]}}{n-1} - \frac{\,\sin{[(n+1)\,\theta]}}{n+1} \nonumber \\
= \frac{\,(n+1)\,[\sin{(n \theta)}\,\cos{\theta} -\sin{\theta}\,\cos{(n \theta)}] -(n-1)\,[\sin{(n \theta)}\,\cos{\theta} +\sin{\theta}\,\cos{(n \theta)}]}{n^2-1} \nonumber \\
= 2 \, \frac{\,\sin{(n \theta)}\,\cos{\theta} -n\,\sin{\theta}\,\cos{(n \theta)}}{n^2-1} \, , \qquad
\end{eqnarray}
The substitution of the last three expressions, above, there in Eq.~\eqref{eq:2} completes the proof. \qed
\end{proof}

\begin{remark}
In particular, for $\,x = 1/2\,$ our Theorem~\ref{teo:int2} promptly yields
\begin{equation}
\eta\!\left( \frac12 \right) = 2 \int_0^{\,1/2}{\ln{\Gamma(t)} \, \sin{(2 \pi t)} \: dt} = \frac{\,\gamma +\ln{(2 \pi)} +2 \ln{\pi} +1}{2 \pi} \, .
\label{eq:eta12}
\end{equation}
\end{remark}

The analytic expression of $\,\eta(x)\,$ established in Theorem~\ref{teo:int2} defines a continuous real function for all $\, x \in (0,1)$. However, it involves a logarithmic term which is undefined at the endpoints. This problem can be fixed by \emph{defining} $\,\eta(0) = \lim_{x \rightarrow 0^{+}}{\eta(x)}\,$ and $\,\eta(1) = \lim_{x \rightarrow 1^{-}}{\eta(x)}$, in a manner to make $\,\eta(x)\,$ continuous for all $\,x \in [0, 1]$.

\begin{theorem}[Extending the domain of $\eta(x)\,$]
\label{teo:extension}
The domain over which the real function $\,\eta(x) = 2 \, \int_0^x{\ln{\Gamma{(t)}} \, \sin{(2 \pi t)}~dt}\,$ is continuous can be extended to the closed interval $[0,1]$ by defining $\,\eta(0) := 0\,$ and $\,\eta(1) := \eta = [\gamma +\ln{(2 \pi)}]/\pi$.
\end{theorem}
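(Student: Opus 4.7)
The plan is to take the one-sided limits of the explicit closed form for $\eta(x)$ established in Theorem~\ref{teo:int2} as $\theta = 2\pi x$ approaches $0$ and $2\pi$, and to check that these limits equal $0$ and $[\gamma+\ln(2\pi)]/\pi$, respectively. Since the formula already exhibits $\eta(x)$ as a continuous function on $(0,1)$, extending it by those two limit values yields continuity on all of $[0,1]$.

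For the limit at $x\to 0^+$ (equivalently $\theta\to 0^+$), I would observe that $\sin^2(\theta/2) = \theta^2/4+O(\theta^4)$ and $\theta\cos\theta-\sin\theta = -\theta^3/3 + O(\theta^5)$, so the first bracket vanishes; the apparently singular term $\sin^2(\theta/2)\bigl[1 + 2\ln(\pi/\sin(\theta/2))\bigr]$ also vanishes because $y^2\ln(1/y)\to 0$ as $y = \sin(\theta/2)\to 0^+$. For the two trigonometric series, I would justify termwise passage to the limit by the Weierstrass M-test, noting the summable majorants $\ln n/n^3$ and $\ln n/(n^2-1) \le 2\ln n/n^2$ for $n\ge 2$; hence both series define continuous functions of $\theta$ on all of $\mathbb R$. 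At $\theta=0$ the first series vanishes termwise (since $\sin 0 = 0$), and the second is multiplied by the prefactor $\sin\theta\to 0$. Together this gives $\lim_{x\to 0^+}\eta(x) = 0$.

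For the limit at $x\to 1^-$ (equivalently $\theta\to 2\pi^-$), the factor $\sin^2(\theta/2) \to \sin^2\pi = 0$ kills the log term by exactly the same $y^2\ln(1/y)\to 0$ argument, while $(\theta\cos\theta - \sin\theta)/(2\pi) \to (2\pi\cdot 1 - 0)/(2\pi) = 1$ produces the main contribution $[\gamma+\ln(2\pi)]/\pi$. The first series vanishes since $\sin(2\pi n) = 0$ for every $n\in\mathbb N$, and the second is again annihilated by $\sin\theta\to 0$ (the cosine series being uniformly bounded by the same M-test). Invoking Lemma~\ref{lema:eta} to identify $[\gamma+\ln(2\pi)]/\pi$ with $\eta$ closes the argument.

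The only non-routine step is the interchange of limits with the infinite sums, but the Weierstrass estimates above handle this uniformly in $\theta$, so no delicate analysis is required. I would note in passing that a completely different route would bypass Theorem~\ref{teo:int2} entirely: since $\ln\Gamma(t) = -\ln t + O(t)$ near $t=0$ and $\sin(2\pi t) = O(t)$, the integrand $\ln\Gamma(t)\sin(2\pi t)$ extends continuously to $[0,1]$ (with value $0$ at both endpoints), so $x\mapsto 2\int_0^x\ln\Gamma(t)\sin(2\pi t)\,dt$ is continuous on $[0,1]$ by the fundamental theorem of calculus, with $\eta(0)=0$ and $\eta(1) = \eta$ by definition~\eqref{eq:eta0}.
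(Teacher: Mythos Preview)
Your argument is correct and follows the same strategy as the paper's proof: evaluate the one-sided limits of the closed form from Theorem~\ref{teo:int2} at $\theta\to 0^+$ and $\theta\to 2\pi^-$, handling the apparently singular logarithmic term via $y^2\ln y\to 0$ (the paper does this with L'H\^opital). You actually improve on the paper by explicitly justifying the termwise passage to the limit in the two trigonometric series via the Weierstrass M-test---the paper simply asserts that those terms ``promptly nullify''---and your closing remark that the integrand $\ln\Gamma(t)\sin(2\pi t)$ itself extends continuously to $[0,1]$ is a genuinely cleaner alternative that bypasses Theorem~\ref{teo:int2} altogether.
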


\begin{proof}
In the analytic expression established for $\,\eta(x)\,$ in Theorem~\ref{teo:int2}, all terms promptly nullify at $\,x=0$, except the one involving $\,\ln{[\pi/\sin{(\theta/2)}]}$, where $\,\theta = 2 \pi x$. The choice $\,\eta(0) = 0\,$ then comes from the limit
\begin{eqnarray}
\lim_{\theta \rightarrow 0^{+}}{\sin^2{\!\left( \frac{\theta}{2} \right)} \, \ln{\!\left[\frac{\pi}{\sin{(\theta/2)}}\right]}} = \ln{\pi} \lim_{\theta \rightarrow 0^{+}}{\sin^2{\!\left( \frac{\theta}{2} \right)}} - \lim_{\theta \rightarrow 0^{+}}{\sin^2{\!\left( \frac{\theta}{2} \right)}  \ln{\!\left[\sin{\!\left( \frac{\theta}{2} \right)}\right]}} \nonumber \\
= - \lim_{\alpha \rightarrow 0^{+}}{\sin^2{\!\alpha} \; \ln{\left(\sin{\alpha}\right)}} =  \lim_{\alpha \rightarrow 0^{+}}{\!\frac{\,\ln{( \csc{\alpha} )}}{\csc^2{\alpha} \, }} \nonumber \\
= \lim_{y \rightarrow +\infty}{\!\frac{\,\ln{y}\,}{y^2}} = \lim_{y \rightarrow \infty}{\!\frac{1}{\:2\,y^2}} = 0 \, , \qquad
\label{eq:lims}
\end{eqnarray}
where the L'H\^{o}pital rule was applied in the last step.

The appropriate choice for $\,\eta(1)\,$ is found by taking $\,x \rightarrow 1^{-}$ (or, equivalently, $\theta \rightarrow {2 \pi}^{-}$) in the expression of $\,\eta(x)\,$ in Theorem~\ref{teo:int2}, which yields
\begin{eqnarray}
\lim_{\; x \, \rightarrow 1^{-}}{\eta(x)} = \frac{\,\gamma +\ln{(2 \pi)}}{\pi} +\frac{1}{\,2 \pi} \! \lim_{\;\, \theta \, \rightarrow {\,2 \pi}^{-}}{\!\sin^2{\!\left(\frac{\theta}{2} \right)} \left[1 +2 \, \ln{\!\left(\frac{\pi}{\sin{(\theta/2)}}\right)} \right]} \nonumber \\
= \eta +\frac{1}{\,2 \pi} \! \lim_{\;\, \theta/2 \, \rightarrow {\,\pi}^{-}}{\sin^2{\!\left(\frac{\theta}{2} \right)}} +\frac{1}{\pi} \! \lim_{\;\, \theta/2 \, \rightarrow {\,\pi}^{-}}{\sin^2{\!\left(\frac{\theta}{2} \right)} \, \ln{\!\left(\frac{\pi}{\sin{(\theta/2)}}\right)} } \nonumber \\
= \eta +\frac{\,\ln{\pi}}{\pi} \! \lim_{\;\, \theta/2 \, \rightarrow {\,\pi}^{-}}{\sin^2{\!\left(\frac{\theta}{2} \right)}} -\frac{1}{\pi} \! \lim_{\;\, \theta/2 \, \rightarrow {\,\pi}^{-}}{\sin^2{\!\left(\frac{\theta}{2} \right)} \, \ln{\!\left[\sin{\left(\frac{\theta}{2} \right)} \right]} } \nonumber \\
= \eta -\frac{1}{\pi} \! \lim_{\; \alpha \, \rightarrow {\,\pi}^{-}}{\sin^2{\!\alpha} \; \ln{\left(\sin{\alpha}\right)}} \, . \qquad
\label{eq:lims2}
\end{eqnarray}
As shown in Eq.~\eqref{eq:lims}, the last limit above is null. \qed
\end{proof}

Note that our choices for $\,\eta(0)\,$ and $\,\eta(1)\,$ agree with the values obtained directly from the integral definition of $\,\eta(x)\,$ in Eq.~\eqref{eq:etax0}, namely $\,\eta(0) = 2 \int_0^0{\ln{\Gamma(t)} \: \sin{(2 \pi t)} \: dt} = 0\,$ and $\,\eta(1) = 2 \int_0^1{\ln{\Gamma(t)} \: \sin{(2 \pi t)} \: dt} = \eta$, as defined in Eq.~\eqref{eq:eta0}.

Interestingly, we can use the function $\,\eta(x)$, as defined in Eq.~\eqref{eq:etax0}, to generalize Theorem~\ref{teo:digamma1}.

\begin{theorem}[Generalization of the digamma integral]
\label{teo:intpsix}
For all $\,x \in (0,1]$, the closed-form result
\begin{eqnarray*}
\int_0^{\,x}{\psi(t) \: \sin^2{\!(\pi t)} \: dt} = \ln{\Gamma(x)} \: \sin^2{\!(\pi x)} -\frac{\pi}{2} \: \eta(x)
\end{eqnarray*}
holds, where $\,\eta(x)\,$ is the function defined in Eq.~\eqref{eq:etax0} for $\,x \in (0,1)$, a domain extended to $\,[0,1]\,$ in Theorem~\ref{teo:extension}.
\end{theorem}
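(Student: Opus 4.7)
The plan is to prove this by integration by parts on the integral defining $\eta(x)$, exploiting the identity $\sin(2 \pi t) = \frac{1}{\pi} \frac{d}{dt}\sin^2(\pi t)$. This mirrors the strategy used in the proof of Theorem~\ref{teo:digamma1}, which amounts to the special case $x=1$ of what we want; only now we must keep the upper limit variable and verify that the boundary contribution at $t=0$ still vanishes.

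Concretely, starting from
\begin{eqnarray*}
\tfrac{\pi}{2}\,\eta(x) \,=\, \pi \int_0^{\,x}{\ln{\Gamma(t)} \, \sin{(2 \pi t)} \: dt} \,=\, \int_0^{\,x}{\ln{\Gamma(t)} \, \frac{d}{dt}\!\left[\sin^2{(\pi t)}\right] dt} \, ,
\end{eqnarray*}
I would integrate by parts, using $\frac{d}{dt}\ln{\Gamma(t)} = \psi(t)$, to obtain
\begin{eqnarray*}
\tfrac{\pi}{2}\,\eta(x) \,=\, \Big[\,\ln{\Gamma(t)}\,\sin^2{(\pi t)}\Big]_{0^{+}}^{\,x} -\int_0^{\,x}{\psi(t)\,\sin^2{(\pi t)} \: dt} \, ,
\end{eqnarray*}
and then simply rearrange for the claimed identity, provided the boundary term at $t=0^+$ is zero.

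The only delicate point is the behaviour at the lower limit, where both $\ln{\Gamma(t)}$ and $\psi(t)$ blow up. For the boundary term, the asymptotics $\Gamma(t) \sim 1/t$ and $\sin{(\pi t)} \sim \pi t$ as $t \to 0^{+}$ give $\ln{\Gamma(t)}\,\sin^2{(\pi t)} \sim -\pi^2 \, t^2 \ln{t} \to 0$, which is the main technical check. For the integrand on the right-hand side, $\psi(t) \sim -1/t$ combined with $\sin^2{(\pi t)} \sim \pi^2 t^2$ yields $\psi(t)\,\sin^2{(\pi t)} \sim -\pi^2\, t$, showing that the integral converges absolutely near $0$, so the integration by parts is fully justified on $(0,x]$.

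Finally, I would remark that at $x=1$ the identity reduces to $\int_0^1{\psi(t)\sin^2{(\pi t)}\,dt} = -\tfrac{\pi}{2}\,\eta$, which recovers Theorem~\ref{teo:digamma1} via Lemma~\ref{lema:eta}. The continuous extension of $\eta(x)$ to $[0,1]$ from Theorem~\ref{teo:extension} further ensures that both sides of the formula are continuous up to the endpoint $x=1$, so no separate argument is needed there. I expect the only real obstacle to be the limit at $t \to 0^{+}$, and this is handled by the L'H\^{o}pital-type computation already carried out in Eq.~\eqref{eq:lims}.
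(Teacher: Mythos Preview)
Your proposal is correct and follows essentially the same route as the paper: both start from the definition of $\eta(x)$, rewrite $\sin(2\pi t)$ via $\sin(\pi t)\cos(\pi t)$, integrate by parts with $\psi(t)=\frac{d}{dt}\ln\Gamma(t)$, and then check the $x=1$ case against Theorem~\ref{teo:digamma1}. Your write-up is in fact slightly cleaner---by recognizing $\pi\sin(2\pi t)=\frac{d}{dt}\sin^2(\pi t)$ outright you avoid the ``$I$ on both sides'' manipulation used in the paper, and you make the vanishing of the boundary term at $t\to 0^{+}$ and the integrability of $\psi(t)\sin^2(\pi t)$ near $0$ explicit, which the paper leaves implicit.
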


\begin{proof}
From the integral definition of $\,\eta(x)\,$ in Eq.~\eqref{eq:etax0}, one has
\begin{equation}
\pi \, \eta(x) = 4 \pi \int_0^x{\ln{\Gamma(t)} \: \sin{(\pi t)} \, \cos{(\pi t)} \: d t} = 4 \int_0^{\,\pi x}{\ln{\Gamma\!\left( \frac{y}{\pi} \right)} \sin{y} \, \cos{y} \: d y} ,
\end{equation}
for all $\,x \in (0,1)$. Now, define the last integral, above, as $I$ and integrate it by parts, following the same steps as those in the proof of Theorem~\ref{teo:digamma1}. After some algebra, one finds
\begin{equation}
2\,I = \frac{\pi}{2} \: \eta(x) = \ln{\Gamma(x)} \, \sin^2(\pi x) - \frac{1}{\pi} \int_0^{\,\pi x}{\psi\!\left(\frac{y}{\pi}\right) \, \sin^2{y} \: d y} \, ,
\end{equation}
from which the closed-form expression for $\,\int_0^x{\psi(t) \: \sin^2{(\pi t)} \, dt}\,$ promptly follows. Finally, for $\,x=1\,$ this closed-form expression reduces to
\begin{equation}
\int_0^1{\psi(t) \: \sin^2{\!(\pi t)} \: dt} = 0 -\frac{\pi}{2} ~ \eta(1) = -\frac{\,\pi \, \eta\,}{2} \, ,
\end{equation}
which agrees with Theorem~\ref{teo:digamma1}. \qed
\end{proof}

For $\,x=\frac12$, on taking into account the special value we have found in Eq.~\eqref{eq:eta12}, one finds

\begin{corollary}
\begin{eqnarray*}
\int_0^{\,1/2}{\psi(t) \, \sin^2{\!(\pi t)} \: d t} = - \frac{\:\gamma +\ln{(2 \pi)} +1}{4} \, .
\end{eqnarray*}
\end{corollary}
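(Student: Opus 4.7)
The plan is to obtain this corollary as a direct specialization of Theorem~\ref{teo:intpsix} at $x=\tfrac12$, feeding in the explicit value of $\eta(1/2)$ already computed in Eq.~\eqref{eq:eta12}. No new tools are needed; the entire argument is a short substitution followed by elementary algebra. The main (and essentially only) things to verify are that the boundary term $\ln\Gamma(x)\,\sin^2(\pi x)$ evaluates cleanly at $x=\tfrac12$, and that the $\pi$-factors cancel correctly against the denominator of $\eta(1/2)$.

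First I would set $x=\tfrac12$ in the identity
\begin{equation*}
\int_0^{\,x}\!\psi(t)\,\sin^2(\pi t)\,dt \;=\; \ln\Gamma(x)\,\sin^2(\pi x)\;-\;\tfrac{\pi}{2}\,\eta(x),
\end{equation*}
given by Theorem~\ref{teo:intpsix}. At this point $\sin^2(\pi/2)=1$ and $\ln\Gamma(1/2)=\tfrac12\ln\pi$, so the boundary term contributes precisely $\tfrac12\ln\pi$.

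Next I would insert the special value
\begin{equation*}
\eta\!\left(\tfrac12\right) \;=\; \frac{\gamma+\ln(2\pi)+2\ln\pi+1}{2\pi}
\end{equation*}
from Eq.~\eqref{eq:eta12}. Multiplying by $\pi/2$ produces $\tfrac14\bigl[\gamma+\ln(2\pi)+2\ln\pi+1\bigr]$, and subtracting this from $\tfrac12\ln\pi$ (which equals $\tfrac14\cdot 2\ln\pi$) makes the $2\ln\pi$ terms cancel, leaving exactly $-\tfrac14\bigl[\gamma+\ln(2\pi)+1\bigr]$, which is the claimed closed form.

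Since every ingredient has already been established earlier in the paper, there is no genuine obstacle; the only thing to be careful about is the bookkeeping of the $\ln\pi$ cancellation, which is immediate once one writes $\tfrac12\ln\pi=\tfrac{2\ln\pi}{4}$ and places both terms over the common denominator $4$.
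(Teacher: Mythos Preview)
Your proposal is correct and matches the paper's approach exactly: the paper also obtains the corollary by specializing Theorem~\ref{teo:intpsix} at $x=\tfrac12$ and substituting the value of $\eta(1/2)$ from Eq.~\eqref{eq:eta12}, with the $\ln\pi$ cancellation you describe.
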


Another useful generalization, from the point of view of Fourier series expansions, is obtained by inserting a positive integer parameter $\,k\,$ in the argument of the sine function in Eq.~\eqref{eq:eta0}, as follows:
\begin{equation}
\eta_k := 2 \int_0^1{\ln{\Gamma(t)} \: \sin{(2 \pi k \, t)} \: dt} \, .
\label{eq:etak}
\end{equation}
The problem of finding closed-form expressions for the Fourier coefficients of $\,\ln{\Gamma(x)}\,$ on the interval $\,(0,1)\,$ was solved in details by Farhi in Ref.~\cite{Farhi}, the result being
\begin{eqnarray}
a_0 = \int_0^1{\ln{\Gamma(t)} \: d t} = \frac12\,\ln{(2 \pi)} \, , \\
a_k = 2 \int_0^1{\ln{\Gamma(t)} \, \cos{(2 \pi k \, t)} \: d t} = \frac{1}{\,2 k} \: , \quad k \ge 1 \, , \label{eq:aks} \\
b_k = 2 \int_0^1{\ln{\Gamma(t)} \, \sin{(2 \pi k \, t)} \: d t} = \eta_k = \frac{\ln{k}}{\,\pi k} +\frac{\eta}{k} \: , \quad k \ge 1 \, . \label{eq:bks}
\label{eq:defcoefsFarhi}
\end{eqnarray}
Here, we are considering the Fourier series in the form
\begin{equation}
a_0 +\sum_{k=1}^\infty{\,a_k\,\cos{(2 \pi k x)}} +\sum_{k=1}^\infty{\,b_k\,\sin{(2 \pi k x)}} \: ,
\label{eq:defFS}
\end{equation}
as adopted in Ref.~\cite{Farhi}. There in that paper, it is shown that this series, with the coefficients given in Eq.~\eqref{eq:defcoefsFarhi}, converges to $\,\ln{\Gamma(x)}\,$ for all $\,x \in (0,1)$.

For $\,x=\frac12\,$ one readily finds $\,\ln{\Gamma(\frac12)} = \frac12 \, \ln{\pi} = \ln{\sqrt{\pi}}\,$, a well-known result. Other less-obvious results arise by considering distinct rational values of $x$. For instance, for $\,x=\frac13\,$ one finds
\begin{eqnarray}
\qquad \; \ln{\Gamma\!\left( \frac13 \right)} = \frac{\sqrt{3}}{6 \pi} \left[\gamma_1\!\left(\frac{1}{3}\right) -\gamma_1\!\left(\frac{2}{3}\right) \right] +\frac{\gamma }{6} +\frac23 \, \ln{(2 \pi)} -\frac{\ln{3}}{12} \, , \quad
\end{eqnarray}
which agrees with the closed-form expressions for $\gamma_1\!\left(\frac{1}{3}\right)$ and $\gamma_1\!\left(\frac{2}{3}\right)$ found by Blagouchine in Eq.~(61) of Ref.~\cite{Blago2014}. For $\,x=\frac14$, one finds
\begin{eqnarray}
\ln{\Gamma\!\left( \frac14 \right)} = \frac12 \, \sum_{k=1}^{\infty}{\frac{\,\cos{(k \pi/2)}}{k}} +\frac{\,\ln{(2 \pi)}}{2} +\frac{1}{\pi} \, \sum_{k=1}^{\infty}{\frac{\,\sin{(k \pi/2)}}{k} \, (\pi\,\eta +\ln{k})} \nonumber \\
= \frac{\,\ln{2}}{4} +\frac{\ln{(2 \pi)}}{2} +\frac{1}{4} \left[ \pi \,\eta +\frac{\gamma_1(1/4) -\gamma_1(3/4)}{\pi} \right] , \qquad
\end{eqnarray}
which simplifies just to our Eq.~\eqref{eq:cof}, so this evaluation can be viewed as an alternative derivation of Coffey's formula~\cite{Coffey2011}.

With the Fourier series expansion of $\,\ln{\Gamma(x)}$, $x \in (0,1)$, in hands, we can use Parseval's theorem to derive an additional closed-form result.

\begin{theorem}[Parseval's theorem for $\ln{\Gamma(x)}\,$]
\label{teo:Parseval}
\begin{eqnarray*}
\int_0^1{\ln^2{\Gamma(t)} \: d t} = 2 \ln{A}\:[\gamma +\ln{(2 \pi)}] -\frac{\,\gamma^2}{12} +\frac{\,\pi^2}{48} +\frac{\ln(2 \pi)}{6} \: [\,\ln{(2 \pi)} -\gamma\,] +\frac{\zeta''(2)}{2 \pi^2} \, .
\end{eqnarray*}
\end{theorem}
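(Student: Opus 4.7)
The plan is to apply Parseval's theorem directly to the Fourier series $\ln{\Gamma(x)} = a_0 + \sum_{k \ge 1}[a_k \cos(2\pi k x) + b_k \sin(2\pi k x)]$ established by Farhi, with the coefficients $a_0 = \tfrac12 \ln(2\pi)$, $a_k = 1/(2k)$, and $b_k = \ln{k}/(\pi k) + \eta/k$ from Eqs.~\eqref{eq:aks}--\eqref{eq:bks}. Since $\ln{\Gamma(x)}$ is Lebesgue square-integrable on $(0,1)$, Parseval's identity gives
\begin{equation*}
\int_0^1 \ln^2{\Gamma(t)} \, dt = a_0^2 + \tfrac12 \sum_{k=1}^\infty (a_k^2 + b_k^2).
\end{equation*}

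Next I would expand $b_k^2 = \ln^2{k}/(\pi^2 k^2) + 2\eta \ln{k}/(\pi k^2) + \eta^2/k^2$ and evaluate the three series term-by-term. Two of them are immediate: $\sum_k 1/k^2 = \pi^2/6$ and $\sum_k \ln^2{k}/k^2 = \zeta''(2)$, while the mixed one is $\sum_k \ln{k}/k^2 = -\zeta'(2)$, which by Glaisher's identity~\eqref{eq:somaln} equals $\tfrac{\pi^2}{6}[\gamma + \ln(2\pi) - 12 \ln{A}]$. Similarly $\sum_k a_k^2 = \pi^2/24$. Collecting these contributions yields an intermediate expression of the form
\begin{equation*}
\int_0^1 \ln^2{\Gamma(t)} \, dt = \tfrac14 \ln^2{(2\pi)} + \tfrac{\pi^2}{48} + \tfrac{\zeta''(2)}{2\pi^2} - \tfrac{\eta \, \zeta'(2)}{\pi} + \tfrac{\eta^2 \pi^2}{12}.
\end{equation*}

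The remaining step is the algebraic clean-up: substitute $\pi \eta = \gamma + \ln(2\pi)$ from Lemma~\ref{lema:eta} and the Glaisher value of $\zeta'(2)$ into the last two terms above. The term $\eta^2 \pi^2/12$ generates $\gamma^2/12 + \gamma \ln(2\pi)/6 + \ln^2(2\pi)/12$, while $-\eta \zeta'(2)/\pi$ produces $2 \ln{A}\,[\gamma + \ln(2\pi)] - \tfrac{1}{6}[\gamma^2 + 2\gamma \ln(2\pi) + \ln^2(2\pi)]$. Combining coefficients of $\gamma^2$, of $\gamma \ln(2\pi)$, and of $\ln^2(2\pi)$ and grouping the last two into $\tfrac{\ln(2\pi)}{6}[\ln(2\pi) - \gamma]$ gives the stated closed form.

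The only real obstacle is the bookkeeping in the final simplification --- each of $\gamma^2$, $\gamma\ln(2\pi)$, and $\ln^2(2\pi)$ collects contributions from three separate sources with different signs, so care is needed to verify that the coefficients reduce precisely to $-1/12$, to the combined $-1/6$ (as part of the symmetric bracket), and to the combined $1/6$. Everything else --- the invocation of Parseval, the identification of the three Dirichlet-type series, and the use of Glaisher's formula --- is mechanical given the results already collected in the paper.
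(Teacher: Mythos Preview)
Your proposal is correct and follows essentially the same route as the paper's own proof: apply Parseval's identity to Farhi's Fourier coefficients, evaluate the resulting Dirichlet sums $\sum k^{-2}$, $\sum k^{-2}\ln k$, $\sum k^{-2}\ln^2 k$ as $\zeta(2)$, $-\zeta'(2)$, $\zeta''(2)$, and finish with Glaisher's formula and $\pi\eta=\gamma+\ln(2\pi)$. One minor slip: where you restate Glaisher's identity you wrote $\tfrac{\pi^2}{6}[\gamma+\ln(2\pi)-12\ln A]$ with the overall sign reversed relative to Eq.~\eqref{eq:somaln}, but you evidently use the correct sign when you evaluate $-\eta\,\zeta'(2)/\pi$, so the final algebra and conclusion are unaffected.
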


\begin{proof}
On applying Parseval's theorem to the Fourier series in Eq.~\eqref{eq:defFS}, one finds
\begin{eqnarray}
\int_0^1{\ln^2{\Gamma(t)}~dt} = a_0^2 +\frac12 \, \sum_{k=1}^\infty{a_k^{\,2}} +\frac12 \, \sum_{k=1}^\infty{b_k^{\,2}} \, ,
\end{eqnarray}
where the Fourier coefficients are those given in Eq.~\eqref{eq:defcoefsFarhi}. This expands to
\begin{eqnarray}
\int_0^1{\ln^2{\Gamma(t)}~dt} = \frac{\ln^2{(2 \pi)}}{4} +\frac18 \: \zeta{(2)} +\frac12 \, \sum_{k=1}^\infty{\left( \frac{\ln^2{k}}{\,\pi\,k^2} +\frac{\eta^2}{k^2} +\frac{2\,\eta}{\pi}~\frac{\ln{k}}{k^2}\right)} \nonumber \\
= \frac{\ln^2{(2 \pi)}}{4} +\frac{\pi^2}{48} +\frac{1}{2\,\pi} \, \sum_{k=2}^\infty{\frac{\ln^2{k}}{k^2}} +\frac{\eta^2}{2}~\frac{\pi^2}{6} +\frac{\eta}{\pi}\,\sum_{k=2}^\infty{\frac{\ln{k}}{k^2}} \, .
\label{eq:longa}
\end{eqnarray}
The first series, above, reduces to $\,\zeta''(2)$. According to our Eq.~\eqref{eq:somaln}, the last series simplifies to $\,(\pi^2/6)\,\left[ 12\,\ln{A} -\gamma -\ln{(2 \pi)}\right]$. The substitution of these two summations in Eq.~\eqref{eq:longa} completes the proof. \qed
\end{proof}

The result of the above integral evaluation agrees with Eq.~(31) of Ref.~\cite{Blago2015}, as well as Eq.~(6.441.6) of Ref.~\cite{Gradsh}.
\newline

An interesting feature of the Fourier coefficients stated in Eq.~\eqref{eq:bks} arises from the analysis of its asymptotic behavior for large values of $k$. For $\,k=1$, it is clear from Eq.~\eqref{eq:bks} that $\,\eta_1 = \eta$. From that equation it also follows that
\begin{equation}
\eta_{\,2 k} = \eta_k/2 +\ln{2}/(2 \pi k) \, , \quad k \ge 1 \, ,
\label{eq:etas2k}
\end{equation}
and
\begin{equation}
\eta_{\,2^k} = \frac{\:(\ln{2}/\pi) \: k +\eta\,}{2^k} \, , \quad k \ge 0 \, .
\label{eq:etapower2}
\end{equation}
These two formulae were derived by Shamov using the Legendre duplication formula for $\Gamma(x)$, in his answer to a question by Silagadze in Ref.~\cite{Shamov}. He indeed takes Eq.~\eqref{eq:etapower2} into account for establishing the asymptotic behavior of $\,\eta_k\,$ for $\,k \rightarrow \infty\,$ and uses it to develop a heuristic proof of our Lemma~\ref{lema:eta}. His reasoning is so aesthetic that it deserves to be mentioned here. He argues that the integrand in the definition of $\,\eta_k\,$ has only one singularity at $\,t=0$, where $\,\Gamma(t) = -\ln{t} -\gamma \,t +\ldots$, so the asymptotic behavior of the Fourier coefficient $\,\eta_k\,$ should be the same as that of $\,\ln{t}$, i.e. $2 \int_0^1{\sin{(2 \pi k t)}\,\ln{\!\left(t^{-1}\right)} \: d t} = \ln{k}/(\pi k) +\eta/k +\ldots$ He then notes that this result can be written as
\begin{eqnarray*}
-\,\frac{1}{\,\pi k} \, \int_0^{\,2 \pi k}{\frac{\cos{z} -1}{z} ~ dz} = \frac{\,\gamma +\ln{(2 \pi \,k)} -\mathrm{Ci}(2 \pi k)}{\pi \, k} \, ,
\end{eqnarray*}
where $\,\mathrm{Ci}(x)\,$ is the cosine integral, which behaves as $\,(\sin{x})/x\,$ for $\,x \rightarrow \infty$. A comparison of the terms of order $\,1/k\,$ in these expansions promptly yields $\,\eta = [\gamma +\ln{(2 \pi)}]/\pi$, which agrees to our Lemma~\ref{lema:eta}.

\begin{acknowledgements}
The author thanks M.~R.~Javier for checking all closed-form expressions proposed in this work with mathematical software to a high numerical precision.
\end{acknowledgements}

\section*{Conflict of interest}
The author declares that he has no conflict of interest.

\end{document}